\def\FRAC#1#2#3{\genfrac{}{}{}{#1}{#2}{#3}}
\def\half{{\mathchoice{\FRAC{1}{1}{2}}%
{\FRAC{2}{1}{2}}%
{\FRAC{3}{1}{2}}%
{\FRAC{4}{1}{2}}}}
\def\estP{P}
\def\estLambda{\Lambda}
\newlength{\noteWidth}
\long\def\notes#1{\ifinner
             {\tiny #1}
             \else
             \marginpar{\parbox[t]{\noteWidth}{\raggedright\tiny #1}}
             \fi}
\newcounter{rmnum}
\newenvironment{romannum}{\begin{list}{{\upshape (\roman{rmnum})}}{\usecounter{rmnum}
\setlength{\leftmargin}{10pt}
\setlength{\rightmargin}{8pt}
\setlength{\topsep}{-2pt}
\setlength{\itemsep}{2pt}
\setlength{\itemindent}{-1pt}
}}{\end{list}}
\def\limsup{\mathop{\rm lim\ sup}}
\def\liminf{\mathop{\rm lim\ inf}}
\def\arginf{\mathop{\rm arg\, inf}}
\def\bfmath#1{{\mathchoice{\mbox{\boldmath$#1$}}%
{\mbox{\boldmath$#1$}}%
{\mbox{\boldmath$\scriptstyle#1$}}%
{\mbox{\boldmath$\scriptscriptstyle#1$}}}}
\def\FRAC#1#2#3{\genfrac{}{}{}{#1}{#2}{#3}}
\def\ddt{{\mathchoice{\FRAC{1}{d}{dt}}%
{\FRAC{1}{d}{dt}}%
{\FRAC{3}{d}{dt}}%
{\FRAC{3}{d}{dt}}}}
\def\nablaIinv{\Xi}
\def\scaleS{S}
\def\bfmX{\bfmath{X}}
\def\bfmS{\bfmath{S}}
\def\bfmX{\bfmath{X}}
\def\A{ {\mathbb{A} }}
\def\C{ {\mathbb{C} }}
\def\R{ {\mathbb{R} }}
\def\L{ {\mathcal{L} }}
\def\Y{ {\mathcal{Y} }}
\def\Yd{\Y_{-\delta}}
\newenvironment{proof}[1][\scshape{Proof:}]
{\begin{trivlist} \item[\hskip \labelsep {\small #1}]}
{\begin{flushright}$\blacksquare$\end{flushright}\end{trivlist}}
\def\Sec#1{Sec.~\ref{#1}}
\def\Fig#1{Fig.~\ref{#1}}
\def\Thm#1{Theorem~\ref{#1}}
\def\eq#1{eq.~\eqref{#1}}
\newtheorem{theorem}{Theorem}
\newtheorem{lemma}{Lemma}
\newtheorem{proposition}{Proposition}
\newtheorem{assumption}{Assumption}
\begin{document}
\title{Large deviation asymptotics for busy periods}
\date{\today}
\author{
Ken R. Duffy\thanks{Hamilton
Institute, National University of Ireland Maynooth, 
Ireland. 
E-mail: \texttt{ken.duffy@nuim.ie}}
\and 
Sean P. Meyn\thanks{Dept. of Electrical and Computer Engineering,
University of Florida, Gainesville, FL 32611-6200 U.S.A.  
E-mail: \texttt{meyn@ece.ufl.edu}}
}

\maketitle

\begin{abstract}
The busy period for a queue is cast as the area swept under the
random walk until it first returns to zero, $B$. Encompassing
non-i.i.d. increments, the large-deviations asymptotics of $B$ is
addressed, under the assumption that the increments satisfy standard
conditions, including a negative drift.  The main conclusions provide
insight on the probability of a large busy period, and the manner
in which this occurs:
\begin{romannum}
\item
The scaled probability of a large busy period has the asymptote, for any $b>0$, 
\begin{align*}
\lim_{n\to\infty} \frac{1}{\sqrt{n}} \log P(B\geq bn) = -K\sqrt{b},
\end{align*}
\begin{align*}
\hbox{
where}
\quad
K = 2 \sqrt{-\int_0^{\lambda^*} \Lambda(\theta) \, d\theta}\,, \quad
\hbox{with $\lambda^*=\sup\{\theta:\Lambda(\theta)\leq0\}$,}
\end{align*}
and with $\Lambda$ denoting the scaled cumulant generating function
of the increments process.

\item
The most likely path to a large swept area is found to be a simple rescaling
of the path  on $[0,1]$ given by,
\[
\psi^*(t) = -\Lambda(\lambda^*(1-t))/\lambda^*\, .
\]
In
contrast to the piecewise linear most likely path leading the random
walk to hit a high level, this is strictly concave in general.
While these two most likely paths have very different forms, their
derivatives coincide at the start of their trajectories, and at
their first return to zero. 

\end{romannum}
These results partially answer an open problem of Kulick and Palmowski
\cite{Kulick11} regarding the tail of the work done during a busy
period at a single server queue.
The paper concludes with applications of these results to the
estimation of the busy period statistics $(\lambda^*, K)$ based on
observations of the increments, offering the possibility of estimating
the likelihood of a large busy period in advance of observing one.

\end{abstract}

\section{Introduction}
Consider $\bfmS=\{S_k : k\ge 0\}$, a random walk that starts at zero
and has (not necessarily i.i.d.) increments process
$\bfmX=\{X_n: n\ge0\}$: 
\begin{align}
\label{eq:lindley}
S_0:=0 \text{ and } S_{k} = \sum_{i=1}^k X_i \text{ for } k\geq1.
\end{align} 
Define the stopping time and stopped variable:
\begin{align}
\label{eq:defB}
\tau := \inf\{k\geq1:S_k\le0\} \text{ and } B = \sum_{i=1}^\tau S_i.
\end{align}
The tail behavior of $B$ (and related random variables) is of
interest in several apparently distinct fields from queueing systems,
to percolation, to insurance \cite{Kulick11,CTCN,DuffyMeyn10}.

By simple rescaling arguments, should the following limit exist,
it must have this form:
\begin{align}
\label{eq:K}
\lim_{n\to\infty} \frac {1}{\sqrt{n}} \log P(B\geq nb) = - K\sqrt{b}
\text{ for some } K\geq 0.
\end{align}
This limit is established with $K>0$ in \cite{blaglymey11} for the
particular case of the the M/M/1 queue. By extending results in
\cite{DuffyMeyn10} from a fixed terminal point to the random terminal
point $\tau$ via the infinite time-horizon sample path Large Deviation
Principle setup in \cite{Ganesh04}, \Thm{thm:main} establishes the
limit \eq{eq:K} for a broad class of non-long-range dependent,
light-tailed arrivals processes, providing a formula for $K$. 
The scaled Cumulant Generating Function (sCGF)
associated with the
scalar LDP is denoted
\begin{align}
\label{eq:scgf}
\Lambda(\theta) = \lim_{k\to\infty} \frac 1k \log E(\exp(\theta S_k))
\text{ for } \theta\in\R.
\end{align}
Identifying $\lambda^*=\sup\{\theta:\Lambda(\theta)\leq
0\}$, 
\Thm{thm:main} shows that
under Assumptions \ref{ass:main} and \ref{ass:main2}, 
\begin{equation}
K = 2 \sqrt{-\int_0^{\lambda^*} \Lambda(\theta) \, d\theta}. 
\label{e:Kformula}
\end{equation}
Its proof establishes that the most likely path to a large swept
area is strictly concave. The most likely path that first returns
to zero at $t=1$ is identified to be
\begin{align*}
\psi^*(t) = -\frac{1}{\lambda^*} \Lambda(\lambda^*(1-t)) 
\text{ for } t\in[0,1],
\end{align*}
with all other most likely paths being simple rescalings of $\psi^*$, as illustrated in \Fig{fig:Lambdapsi}.
This is in contrast to the most likely path for the random walk to
hit a high level, which is known to be piecewise linear, e.g.
\cite{Anantharam89, Ganesh02}.  

\begin{figure}
\centerline{
\includegraphics[width=.85\columnwidth]{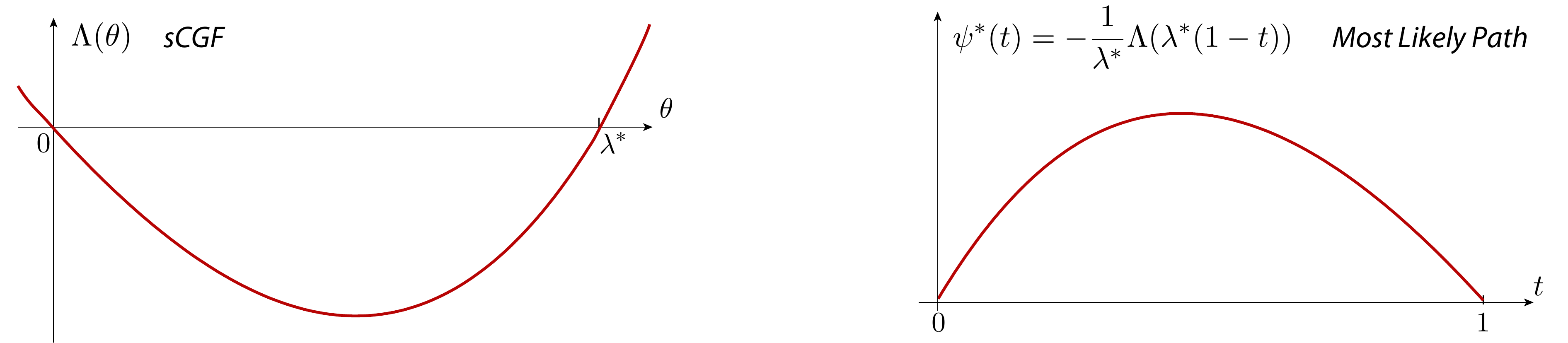}
}
\caption{Relationship between $\Lambda$ and $\psi^*$}
\label{fig:Lambdapsi}
\end{figure}

\begin{figure}
\begin{center}
\includegraphics[width=1.0\columnwidth]{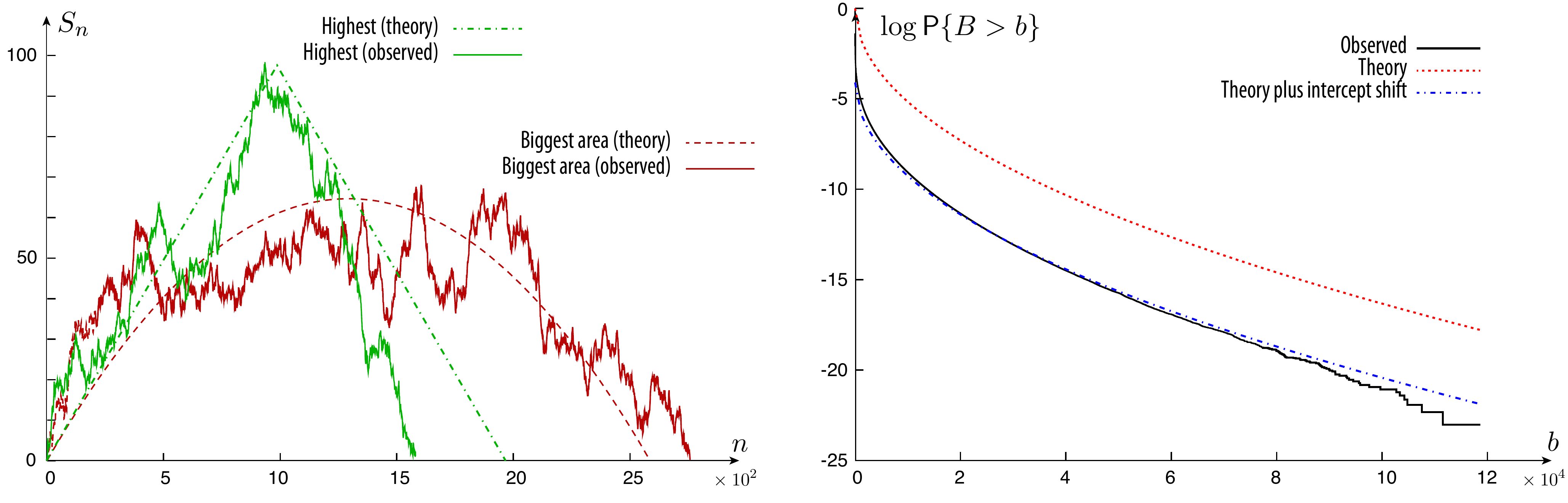}
\caption{I.i.d. Gaussian(-1/10,1) increments. 
Simulated results from ten billion paths. 
(Left panel)
Four paths displayed. Simulated path with largest swept area
$111,524.13$ and, conditioned on that area, the most likely path
to give rise to it as predicted by \Thm{thm:main}, which is concave.
Also shown is the simulated path that reaches the greatest height
$98.36$ and, conditioned on that height, the most likely path to
give rise to it as deduced from \Thm{thm:goc}, which is piecewise
linear. (Right panel) Logarithm of the empirical probability
that the area under busy period exceeds a given level compared
with theoretical prediction of $-K\sqrt{b}$ and (as the
asymptote does not capture the prefactor) the linear shift
$-K\sqrt{b}+\kappa$, where $\kappa$ is chosen to match the
offset of the empirical observation}
\label{fig:gauss}
\end{center}
\end{figure}

As an illustrative example, consider a random walk with i.i.d
Gaussian increments for which everything is calculated in closed
form in \Sec{sec:exgauss}. Ten billion busy period paths were
simulated and, of these, the one with the largest swept area as
well as the one that attained the greatest height were recorded.
In addition to plotting these paths in \Fig{fig:gauss}, conditioned
on these values the most likely paths to these events are shown.
The distinct shapes of the paths to these two unlikely events is
apparent. Note that the terminal times of the theoretically predicted
paths are deductions of the conditioned area and height, respectively,
and are not constrained explicitly. Also shown is the logarithm of
the empirical probability with which the busy period exceeded a
given value as compared to the large deviation estimate. Up to a
constant prefactor, the approximation inferred from \Thm{thm:main}
is remarkably accurate.

These results provide a partial answer to Open Problem 3.1 of
\cite{Kulick11} regarding precise asymptotics for the probability
that $B$ is large, by identifying the associated rough asymptotics
and showing that most likely paths are typically strictly concave.
They also reveal a lacuna in \cite[Theorem 4.1]{Borovkov03} where
the most likely path is assumed to be piecewise linear.

We conclude the paper in \Sec{sec:conc} with a discussion of the
practical utility of these results. Given observations of the
increments process $\bfmX$, estimates of the key quantities
$(\lambda^*,K)$ can be created that, under additional restrictions
on $\bfmX$, can be shown to satisfy a large deviation principle.
This offers the possibility of accurately estimating the likelihood
of a long busy period in advance of observing one.

\section{Functional setup}

The framework for analysis used in this paper  was first developed for weak
convergence of probability measures \cite{Muller68,Borovkov73,Whitt72},
and subsequently used in the context of sample path large deviations,
e.g. \cite{Deuschel89,Majewski00,Wischik01,Ganesh02,Ganesh04,Duffy08a}.
In particular, Ganesh and O'Connell \cite{Ganesh02} employed this
setup to establish an infinite time horizon version of Anantharam's
result \cite{Anantharam89}, proving that the most likely path to
exceed a high level for a random walk with negative drift is piecewise
linear on the scale of large deviations.

Let $\C[0,\infty)$ denote the collection of real-valued continuous
functions on $[0,\infty)$. Let $\A[0,\infty)$ denote the collection
of the integrals of functions that are elements of $\L^1[0,x)$ for
all $x>0$ (for example, see Riesz and Sz.-Nagy \cite{Riesz55}).
For each $r\in\R$, define the space
\begin{align}
\label{e:Yr}
\Y_r:=\left\{\phi\in\C[0,\infty):\lim_{t\to\infty}
	\frac{\phi(t)}{(1+t)} = r \right\}
\end{align}
and equip it with the topology induced by the norm
\begin{displaymath}
\|\phi\| =  \sup_{t\ge0} \left|\frac{\phi(t)}{1+t}\right|.
\end{displaymath}

Define the polygonal sample paths 
\begin{align*}
S_n(t):= \frac 1n S_{[nt]}+\left(t-\frac{[nt]}{n}\right)X_{[nt]+1},
        \;\mbox{for}\;t\in[0,\infty).
\end{align*}
The sample path process $\{S_n(\cdot)\}$ is known to satisfy the
LDP in $\Y_r$ for a broad class of non-long-range dependent,
non-heavy-tailed random walks. For example, see \cite[Theorem
1]{Ganesh02}, where \cite[Theorem 2]{Dembo95} provides general
mixing and uniform tail exponent conditions under which the
prerequisites of this theorem hold. This encompasses increment
processes that are Harris recurrent Markov chains, subject to a
Foster-Lyapunov drift condition~\cite{konmey05a}. The existence
of such an LDP will be the primary assumption in our proof of
\eq{eq:K}.

\section{Tail asymptotics}

We shall make two assumptions. The first is the existence of a
sample path LDP for the random walk that ensures the walk has
negative drift, but a possibility of becoming positive on the scale
of large deviations.
\begin{assumption}
\label{ass:main}
The sample path process $\{S_n(\cdot)\}$ satisfies the LDP in $\Yd$, some $\delta >0$, with rate function
\begin{align}
\label{eq:Iinfty}
I_\infty(\phi)=
\begin{cases}
\int_0^\infty I(\dot\phi(t))dt & \;\mbox{if}\;\phi\in\A[0,\infty)\cap\Yd,\\
	+\infty & \text{ otherwise},
\end{cases}
\end{align}
where $I$ is the good, strictly convex rate function associated
with the random walk $\{S_n(1)\}$ and $I(x)<\infty$ for some $x>0$.
\end{assumption}
Our identification of the most likely path to a large swept area
will have a surprising relationship with the most likely path to
exceed a high level, so we recall the following result of Ganesh
and O'Connell.
\begin{theorem}[\cite{Ganesh02}]
\label{thm:goc}
Under Assumption~\ref{ass:main}, the following exist and are
non-negative
\begin{align} 
\label{e:xstar}
x^* = \arginf_{x>0} \frac{1}{x} I(x) 
\text{ and }
\lambda^* = \frac{1}{x^*} I(x^*) = 
	\sup\{\theta:\Lambda(\theta)\leq 0\}.
\end{align}
Moreover, for any $h>0$,
\begin{align}
\label{eq:glynnwhitt}
\lim_{n\to\infty} \frac 1n \log P\left(\sup_k S_k \geq nh\right) = 
	-h \lambda^*,
\end{align}
while the most likely path to this event is
\begin{align}
\label{eq:varphi}
\varphi^*(t)  = 
	\begin{cases}
	x^*t  & \text{ if } t<h/x^*,\\	
	h+(t-h/x^*)(-\delta) & \text{ if } t\geq h/x^*.
	\end{cases}
\end{align}
\end{theorem}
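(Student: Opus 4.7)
\emph{Strategy.} The plan is to apply Assumption~\ref{ass:main} via a contraction-style argument to the functional $F(\phi):=\sup_{t\ge 0}\phi(t)$ on $\Yd$, noting that $\{\sup_k S_k\geq nh\}$ corresponds at the process level to $\{S_n(\cdot)\in A\}$ with $A:=\{\phi\in\Yd:F(\phi)\ge h\}$. After confirming existence of $x^*$ and the Legendre-duality identity for $\lambda^*$, I would solve the variational problem $\inf_{\phi\in A}I_\infty(\phi)=h\lambda^*$, show that the infimum is uniquely attained by $\varphi^*$, and combine with the two LDP bounds to read off both \eqref{eq:glynnwhitt} and \eqref{eq:varphi}.

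\emph{Existence and Legendre identity.} Since $I$ is a good, strictly convex rate function that vanishes only at the (negative) mean $m$, the map $x\mapsto I(x)/x$ is strictly positive on $(0,\infty)$ with $\lim_{x\downarrow 0} I(x)/x=+\infty$ (because $I(0)>0$) and $\lim_{x\to\infty} I(x)/x=+\infty$ (superlinearity). Hence its infimum is attained at a unique $x^*>0$. The first-order condition $x^*I'(x^*)=I(x^*)$ combined with the Fenchel relation $\Lambda(I'(x^*))=x^*I'(x^*)-I(x^*)$ gives $\Lambda(\lambda^*)=0$ with $\lambda^*:=I'(x^*)=I(x^*)/x^*$. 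Convexity of $\Lambda$ together with $\Lambda(0)=0$ and $\dot\Lambda(0)=m<0$ then pins $\lambda^*=\sup\{\theta:\Lambda(\theta)\le 0\}$.

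\emph{Variational problem.} For any $\phi\in\A[0,\infty)\cap\Yd$ with $T:=\inf\{t:\phi(t)\ge h\}<\infty$ (so $\phi(T)=h$ by continuity), Jensen's inequality applied to the convex $I$, combined with monotonicity of $I$ on $[m,\infty)$, yields
\[
\int_0^T I(\dot\phi(t))\,dt \;\ge\; T\,I(h/T) \;=\; h\cdot\frac{I(h/T)}{h/T} \;\ge\; h\lambda^*,
\]
so $I_\infty(\phi)\ge h\lambda^*$. Equality forces Jensen to be tight, so by strict convexity of $I$ one has $\dot\phi\equiv x^*$ on $[0,T]$ with $T=h/x^*$, while the further requirement $\int_T^\infty I(\dot\phi)\,dt=0$ forces $\dot\phi\equiv m$ on $[T,\infty)$ since $m$ is the unique zero of $I$. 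The resulting minimizer is precisely $\varphi^*$ in \eqref{eq:varphi}.

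\emph{LDP conclusion and main obstacle.} The set $A$ is closed in the $\Yd$ norm because $F$ is continuous there: paths in $\Yd$ have asymptotic slope $-\delta<0$, so their suprema are attained in a bounded window, and that window can be taken uniform along any norm-convergent sequence. Applying the LDP upper bound to $A$ gives $\limsup n^{-1}\log P(\sup_k S_k\ge nh)\le -h\lambda^*$, and the matching lower bound follows from applying the LDP to open neighborhoods of slight upward perturbations of $\varphi^*$ and letting the perturbation vanish using continuity of $I$. The principal technical step is the closedness verification; a secondary care-point is the implicit compatibility $m=-\delta$ between the asymptotic slope defining $\Yd$ and the zero of $I$, without which $I_\infty(\varphi^*)$ would fail to equal $h\lambda^*$.
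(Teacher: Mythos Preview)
The paper does not supply a proof of Theorem~\ref{thm:goc}: it is quoted from \cite{Ganesh02} and used only as background against which the new busy-period results are contrasted. There is therefore nothing in the paper to compare your argument to.

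Your sketch is, however, essentially the Ganesh--O'Connell argument: identify $\{\sup_k S_k\ge nh\}$ with a closed set in $\Yd$ via the continuous functional $\phi\mapsto\sup_{t\ge0}\phi(t)$, reduce the variational problem by Jensen to $\inf_{T>0}T\,I(h/T)=h\inf_{x>0}I(x)/x$, read off the piecewise-linear minimizer, and match upper and lower LDP bounds by perturbation. One genuine gap: your existence argument for $x^*$ appeals to ``superlinearity'' to force $I(x)/x\to\infty$ as $x\to\infty$, but goodness plus strict convexity of $I$ does not by itself give superlinear growth (take $I(x)=\sqrt{1+(x+\delta)^2}-1$, which is good and strictly convex yet has $I(x)/x\to1$, with the infimum over $x>0$ not attained). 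You need an extra steepness-type hypothesis on $\Lambda$, or to argue existence differently; \cite{Ganesh02} imposes conditions that rule this out. Your closing remark about the implicit identification of $-\delta$ with the zero of $I$ is correct and worth keeping: without it $\varphi^*\notin\Yd$ on the tail and $I_\infty(\varphi^*)$ would fail to equal $h\lambda^*$.
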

Note that the probability of sweeping a large area, \eq{eq:K},
decays on a slower scale than the probability of hitting a high
height, \eq{eq:glynnwhitt}.

For the supremum of a random walk, $\lambda^*$ determines the rate
of decay of the probability of hitting a high level as shown in
\eq{eq:glynnwhitt}. For busy periods, it will play a new and
surprising r\^ole in the characterization of the most likely path
to a large swept area. The inverse of $\nabla I$ will prove 
central to the development that follows. To that end, our second
assumption is the following regularity condition.
\begin{assumption}
\label{ass:main2}
The rate function $I$ is continuously differentiable on an interval
that contains $[-\delta,x^*]$.
\end{assumption}
This assumption justifies the definition, 
\begin{align}
\label{e:nablaIinv}
\nablaIinv(r) =: (\nabla I)^{-1}(r),\quad 
	\text{$r\in\R$, whenever the inverse exists.}
\end{align}
The inverse exists, so that $\nablaIinv$   is finite-valued, on an interval that contains
$[\nablaIinv(-\delta),\nablaIinv(x^*)]=[0,\lambda^*]$.  That
$\lambda^*$ is significant here stems from the second part of
the following lemma.
\begin{lemma}
\label{lem:lambdastaralt}
The scalar $\lambda^*$ defined in \eq{e:xstar} satisfies 
\begin{align*}
\lambda^*= \nabla I(x^*). 
\end{align*}
Moreover, it is the unique positive solution of
\begin{align}
\label{e:lambda3}
\int_0^1\nablaIinv(\lambda^*s)\, ds =0.
\end{align}
\end{lemma}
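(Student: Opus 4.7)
The plan is to reduce both parts to standard facts about the Legendre duality between the rate function $I$ and the sCGF $\Lambda$, together with a first-order condition at $x^*$.

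For the first equality, I would apply the first-order necessary condition for the interior minimum $x^*$ of $x \mapsto I(x)/x$ on $x>0$. Setting the derivative to zero gives $x^* \nabla I(x^*) = I(x^*)$, which, upon dividing by $x^*$, yields $\nabla I(x^*) = I(x^*)/x^* = \lambda^*$. (That the minimum is attained at an interior point and that $I$ is differentiable there is guaranteed by Assumption~\ref{ass:main} and Assumption~\ref{ass:main2}, with $x^*\in(0,\infty)$ since $I(x)<\infty$ for some $x>0$ and $I(0)>0$ by negative drift.)

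For the second claim I would change variables $u=\lambda^* s$ to rewrite
\begin{equation*}
\int_0^1 \nablaIinv(\lambda^* s)\, ds = \frac{1}{\lambda^*} \int_0^{\lambda^*} \nablaIinv(u)\, du.
\end{equation*}
Since $I$ is the Legendre--Fenchel conjugate of $\Lambda$, and both are smooth and strictly convex on the relevant intervals (Assumption~\ref{ass:main2} and strict convexity of $I$ from Assumption~\ref{ass:main}), the Legendre duality gives $\nablaIinv = \nabla\Lambda$ on the interval $[0,\lambda^*]$. Hence
\begin{equation*}
\int_0^{\lambda^*} \nablaIinv(u)\, du \;=\; \int_0^{\lambda^*} \nabla\Lambda(u)\, du \;=\; \Lambda(\lambda^*) - \Lambda(0).
\end{equation*}
Now $\Lambda(0)=0$ by definition of the sCGF, and $\Lambda(\lambda^*)=0$ since $\lambda^*=\sup\{\theta:\Lambda(\theta)\le 0\}$ is finite (the drift is negative and $I(x)<\infty$ for some $x>0$, forcing $\Lambda$ to be finite and strictly convex in a neighbourhood of $\lambda^*$) and $\Lambda$ is continuous. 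This proves \eq{e:lambda3}.

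For uniqueness, set $F(\lambda):=\int_0^1 \nablaIinv(\lambda s)\, ds = \Lambda(\lambda)/\lambda$ for $\lambda>0$. A positive solution of $F(\lambda)=0$ is exactly a positive zero of $\Lambda$. Because $\Lambda$ is strictly convex with $\Lambda(0)=0$ and $\nabla\Lambda(0)=E[X_1]<0$ (negative drift), $\Lambda$ is strictly decreasing on a neighbourhood of the origin and then, by strict convexity, can cross zero again at most once; this single crossing is $\lambda^*$. The main subtle point, and the only one that really requires care, is checking that $\nabla I$ is actually invertible on $[-\delta,x^*]$ with range containing $[0,\lambda^*]$, so that the substitution $\nablaIinv=\nabla\Lambda$ is legitimate on the whole interval of integration; this is delivered by Assumption~\ref{ass:main2} together with the fact that $\nabla I(0)\le 0$ (negative drift) and $\nabla I(x^*)=\lambda^*$ from the first part.
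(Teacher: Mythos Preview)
Your first claim is proved exactly as in the paper, via the first-order condition for the minimizer $x^*$ of $I(x)/x$. For the integral identity your route diverges from the paper's: both begin with the substitution $u=\lambda^* s$ reducing the claim to $\int_0^{\lambda^*}\nablaIinv(u)\,du=0$, but the paper then uses integration by parts together with the relation $\lambda^* x^*=I(x^*)$ to evaluate this purely in terms of $I$ (one obtains $\lambda^* x^*-I(x^*)=0$), whereas you invoke the duality $\nablaIinv=\nabla\Lambda$ and the characterization of $\lambda^*$ as the positive root of $\Lambda$ to get $\Lambda(\lambda^*)-\Lambda(0)=0$. Your argument is a little cleaner and makes the connection with $\Lambda$ explicit from the outset (the paper postpones the identity $\nablaIinv=\nabla\Lambda$ to Proposition~\ref{lem:psistaralt}); the paper's version, in exchange, avoids any appeal to duality at this stage. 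The uniqueness arguments are likewise different but equivalent: the paper argues directly from strict monotonicity of $\nablaIinv$ with $\nablaIinv(0)=-\delta<0$, while you translate to positive zeros of the strictly convex $\Lambda$. One small slip to fix: in your closing remark you write ``$\nabla I(0)\le 0$ (negative drift)'' when arguing that the range of $\nabla I$ on $[-\delta,x^*]$ covers $[0,\lambda^*]$; in fact $\nabla I(0)>0$ since $I$ is minimized at $-\delta<0$. The endpoint fact you actually need (and which does hold) is $\nabla I(-\delta)=0$.
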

\begin{proof}
The identity $\nabla I(x^*) = I(x^*)/x^*$ follows from the first-order
optimality condition for $x^*$, based on its definition in \eq{e:xstar}.

Clearly $\lambda^*$ is positive  as $x^*$ is positive and $I(x)>0$
for all $x>-\delta$. To see that $\lambda^*$ thus defined is a
solution of \eq{e:lambda3}, direct substitution, change of
variables and integration by parts suffices. To see it is unique,
note that it is equivalently characterized as any positive solution
of
\begin{align*}
\int_0^{\lambda^*}\nablaIinv(s)\, ds =0.  
\end{align*}
As $I$ is strictly convex, $\nablaIinv(x)$ is strictly increasing
when finite. At $x=0$ we have that $\nablaIinv(0)=-\delta<0$,
so this equation only has one positive solution.
\end{proof}

We will use $\lambda^*$ to define the most likely path to sweep
an area over $[0,1]$:
\begin{align}
\label{eq:psi*}
\psi^*(t) =
	\begin{cases}
	\displaystyle\int_0^t \nablaIinv\left(\lambda^*(1-s)\right) ds& 
		\text{ for } t\in[0,1]\\
	(1-t)\delta & \text { for } t\geq 1.
	\end{cases}
\end{align}
The most likely path to sweep any other area will be a simple
rescaling of this solution.
This path $\psi^*$ is strictly concave on $[0,1]$ as $I$ is strictly convex.
On this interval it is of the form found in \cite{DuffyMeyn10} in
the analysis of simulation of queues: on differentiating each side
of \eq{eq:psi*}, it follows that the path satisfies the simple
differential equation,
\begin{align}
\label{e:psi*dot}
\nabla I \, \left(\ddt\psi^*(t) \right)  = \lambda^*(1-t) 
	\text{ for } t\in[0,1].
\end{align}
Note that, thus defined, we have that
\begin{align}
\label{eq:beginend}
\ddt\psi^*\, (0) = \nablaIinv(\lambda^*)=x^*
\quad
\text{ and }
\quad
\ddt\psi^*\, (1) = \nablaIinv(0)=-\delta.
\end{align}
That is, remarkably, the most likely paths to sweeping a large area,
$\psi^*$ in \eq{eq:psi*}, and to exceeding a large height, $\varphi^*$
in \eq{eq:varphi}, both start and end with identical derivatives,
but are distinct in-between. Before providing the main result, we
establish the following   characterizations of $\psi^*$. 
\begin{proposition}
\label{lem:psistaralt}
The following hold for $t\in[0,1]$:
\begin{romannum}
\item
In contrast to \eq{eq:psi*}, a non-integral representation is obtained
in terms of the rate function,
\begin{align}
\label{eq:psi*alt}
\psi^*(t) =
\frac{I\left( \nablaIinv(\lambda^*(1-t)) \right)}{\lambda^*}
                - \nablaIinv(\lambda^*(1-t)) (1-t).
\end{align}

\item In terms of the sCGF,
\begin{align}
\label{eq:psi*alt1}
\psi^*(t) = -\frac{1}{\lambda^*} \Lambda(\lambda^*(1-t)).
\end{align}
and hence, 
\begin{align*}
\ddt\psi^*(t) = \nabla\Lambda(\lambda^*(1-t))
\end{align*}  
\end{romannum}
\end{proposition}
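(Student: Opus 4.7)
The proof hinges on the Fenchel--Legendre duality between $I$ and $\Lambda$: since $I$ is the strictly convex rate function associated with the sCGF $\Lambda$ of the scalar LDP, and $I$ is continuously differentiable on $[-\delta,x^*]$ by Assumption~\ref{ass:main2}, $\Lambda$ is strictly convex and smooth on $[0,\lambda^*]$ with $\nabla\Lambda = (\nabla I)^{-1} = \nablaIinv$. Both identities in the proposition are then simple unpackings of this correspondence.

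\emph{Step 1 (Derivation of (ii) from the integral form).} Starting from \eq{eq:psi*}, change variables $u = \lambda^*(1-s)$ to write
\begin{equation*}
\psi^*(t) \;=\; \frac{1}{\lambda^*}\int_{\lambda^*(1-t)}^{\lambda^*}\nablaIinv(u)\,du.
\end{equation*}
By Lemma~\ref{lem:lambdastaralt}, $\int_0^{\lambda^*}\nablaIinv(u)\,du = 0$, so this also equals $-\frac{1}{\lambda^*}\int_0^{\lambda^*(1-t)}\nablaIinv(u)\,du$. Using $\nablaIinv = \nabla\Lambda$ together with $\Lambda(0) = 0$ (immediate from the definition of the sCGF), the fundamental theorem of calculus delivers
\begin{equation*}
\psi^*(t) \;=\; -\frac{1}{\lambda^*}\Lambda(\lambda^*(1-t)),
\end{equation*}
which is (ii). The derivative identity $\ddt\psi^*(t) = \nabla\Lambda(\lambda^*(1-t))$ then follows by a one-line chain rule.

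\emph{Step 2 (Equivalence of (i) and (ii)).} Apply the Fenchel--Young equality at $\theta = \lambda^*(1-t)\in[0,\lambda^*]$,
\begin{equation*}
\Lambda(\theta) + I(\nabla\Lambda(\theta)) \;=\; \theta\,\nabla\Lambda(\theta),
\end{equation*}
which rearranges to $-\Lambda(\theta)/\lambda^* = I(\nablaIinv(\theta))/\lambda^* - \theta\,\nablaIinv(\theta)/\lambda^*$. Substituting $\theta = \lambda^*(1-t)$ and comparing with (ii) yields (i).

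\emph{Main obstacle.} No step is conceptually deep. The only delicate point is justifying the Legendre correspondence $\nabla\Lambda = \nablaIinv$ and the smoothness of $\Lambda$ on $[0,\lambda^*]$; both follow from the fact that $I$ is the rate function of the marginal LDP whose sCGF is $\Lambda$, together with the regularity in Assumption~\ref{ass:main2}. Once this duality is in hand, the entire proposition reduces to a one-line change of variables plus one application of Fenchel--Young.
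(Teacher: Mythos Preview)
Your proof is correct and follows essentially the same route as the paper: both arguments rest on the convex-duality identity $\nabla\Lambda=\nablaIinv$ on $[0,\lambda^*]$, integrate the defining formula \eqref{eq:psi*} to obtain \eqref{eq:psi*alt1}, and then read off \eqref{eq:psi*alt} from the Fenchel--Young relation $\Lambda(\theta)=\theta\,\nablaIinv(\theta)-I(\nablaIinv(\theta))$. The only cosmetic difference is the boundary condition used after integrating: the paper fixes the constant via $\psi^*(0)=0$ (equivalently $\Lambda(\lambda^*)=0$), whereas you flip the limits using Lemma~\ref{lem:lambdastaralt} and then invoke $\Lambda(0)=0$; these are equivalent since $\int_0^{\lambda^*}\nablaIinv = \Lambda(\lambda^*)-\Lambda(0)$.
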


\begin{proof} 
The identity $\nabla\Lambda(\theta) = \nablaIinv(\theta)$ can be
established for $\theta\in[0,\lambda^*]$ based on convex duality
(e.g,  \cite[Proposition 11.3]{Rockafellar98}).
The representation in \eq{eq:psi*alt1} then follows by integration
using $\psi^*(0)=0$. The characterization in \eq{eq:psi*alt} can
be obtained from \eq{eq:psi*alt1} noting that
$\Lambda(\theta)=\theta\,\nablaIinv(\theta)-I(\nablaIinv(\theta))$ for
$\theta\in[0,\lambda^*]$.
\end{proof}

A simpler expression for $\lambda^*$ is obtained when  $\Lambda$
is symmetric.  This symmetry can be interpreted as asymptotic
reversibility of the underlying walk.
\begin{proposition}
\label{lemma:rev}
Recall that $\lambda^*>0$ is a zero of $\Lambda$, $-\delta<0$ is a
zero of $I$, and $x^*>0$ is the slope given in Theorem~\ref{thm:goc}.
Under the symmetry condition,
\begin{align}
\label{sym:lambda}
\Lambda(\theta) = \Lambda(\lambda^*-\theta) \text{ for all } 
	\theta\in [0,\lambda^*],
\end{align}
then these parameters are related as follows,
\begin{align}
\label{eq:lambdarev}
x^* = \delta 
\text{ and }
\lambda^* = \nabla I(\delta) = I(\delta)/\delta = 2\nabla I(0).
\end{align}
Moreover, if \eq{sym:lambda} holds for all $\theta\in\R$, then 
\begin{align}
\label{eq:lambdarev2}
\lambda^* = \frac{I(x)-I(-x)}{x} \text{ for all } x\neq 0 \text{ such that }
I(x) <\infty.
\end{align}
\end{proposition}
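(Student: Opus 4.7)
The plan rests on convex duality between the sCGF $\Lambda$ and the rate function $I$, specifically the relation $\nabla\Lambda=(\nabla I)^{-1}=\nablaIinv$ on $[0,\lambda^*]$ already invoked in the proof of Proposition~\ref{lem:psistaralt}. Differentiating the symmetry $\Lambda(\theta)=\Lambda(\lambda^*-\theta)$ in $\theta$ yields
\begin{align*}
\nabla\Lambda(\theta)=-\nabla\Lambda(\lambda^*-\theta),\quad\theta\in[0,\lambda^*],
\end{align*}
and all three identities in \eq{eq:lambdarev} will be read off from this at the endpoints and midpoint. Evaluating at $\theta=0$, and using that $\nabla I(-\delta)=0$ because $-\delta$ is the unique zero of the nonnegative strictly convex rate function $I$, duality gives $\nabla\Lambda(0)=-\delta$, hence $\nabla\Lambda(\lambda^*)=\delta$. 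Lemma~\ref{lem:lambdastaralt} then forces $x^*=\nablaIinv(\lambda^*)=\nabla\Lambda(\lambda^*)=\delta$, after which $\lambda^*=\nabla I(\delta)=I(\delta)/\delta$ are immediate from the same lemma with $x^*=\delta$ substituted. Finally, setting $\theta=\lambda^*/2$ in the differentiated symmetry gives $\nabla\Lambda(\lambda^*/2)=0$, which translates via duality to $\lambda^*/2=\nabla I(0)$.

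For the second statement, assume that the symmetry \eq{sym:lambda} extends to all of $\R$. Apply the Fenchel--Legendre representation $I(x)=\sup_{\theta\in\R}(\theta x-\Lambda(\theta))$ and make the substitution $\theta'=\lambda^*-\theta$:
\begin{align*}
I(x)=\sup_{\theta\in\R}\bigl(\theta x-\Lambda(\lambda^*-\theta)\bigr)=\sup_{\theta'\in\R}\bigl((\lambda^*-\theta')x-\Lambda(\theta')\bigr)=\lambda^*x+I(-x).
\end{align*}
Rearranging and dividing by $x\neq0$ yields $\lambda^*=(I(x)-I(-x))/x$ whenever $I(x)<\infty$.

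The main obstacle is more bookkeeping than analysis: one has to ensure that the convex-dual identity $\nabla\Lambda=\nablaIinv$ really does apply at the specific points $0$, $\lambda^*/2$ and $\lambda^*$, which is covered by the interval $[0,\lambda^*]$ on which $\nablaIinv$ is finite (the discussion following \eq{e:nablaIinv}) together with the smoothness supplied by Assumption~\ref{ass:main2}. The Fenchel--Legendre step in the second part is standard for good convex rate functions, and the change of variable $\theta'=\lambda^*-\theta$ is harmless because the supremum ranges over all of $\R$.
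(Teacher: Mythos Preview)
Your proof is correct and is essentially the same as the paper's. The paper routes the argument through the symmetry $\psi^*(t)=\psi^*(1-t)$ and evaluates $\ddt\psi^*$ at $t=0,\tfrac12,1$, while you differentiate $\Lambda(\theta)=\Lambda(\lambda^*-\theta)$ directly and evaluate at $\theta=0,\tfrac{\lambda^*}{2},\lambda^*$; since $\ddt\psi^*(t)=\nabla\Lambda(\lambda^*(1-t))$ these are the same computation, and the second part via the Fenchel--Legendre substitution $\theta'=\lambda^*-\theta$ is identical in both.
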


\begin{proof}
If $\Lambda(\theta) = \Lambda(\lambda^*-\theta)$ for $\theta\in [0,\lambda^*]$, 
the path $\psi^*(t)$ is symmetric with $\psi^*(t)=\psi^*(1-t)$.
Thus $\ddt\psi^*(t)=-\ddt\psi^*(1-t)$.
In particular, evaluating this at $t=0$ and using 
\eq{eq:beginend} gives $x^*=\ddt\psi^*(0)= -\ddt\psi^*(1)=\delta$.
Hence by \eq{e:xstar}, $\lambda^* = \nabla I(\delta)
= I(\delta)/\delta$. Equating the derivatives at $t=1/2$ gives
$2\,\nablaIinv(\lambda^*/2)=0$ so that $\lambda^* = 2\nabla I(0)$.

If \eq{sym:lambda} holds for all $\theta$, then
\begin{align*}
I(-x) 
&= \sup_{\theta\in\R} \left(\theta(-x) -\Lambda(\lambda^*-\theta) \right)
= \sup_{\theta\in\R} \left((\lambda^*-\theta)(-x) -\Lambda(\theta)\right) 
= I(x) -\lambda^*x \text{ for all } x.
\end{align*}
This gives \eq{eq:lambdarev2} for all $x\in\R$ such that $I(x)<\infty$.
\end{proof}

For example, eq. \eqref{sym:lambda} is satisfied if $\bfmX$ is
i.i.d., $E(\exp(\theta X_1)$ is finite in a neighborhood of the
origin and $P(X_1=x)/P(X_1=-x)=\exp(-\lambda^* x)$ for all $x$, as
holds for $X_1$ Gaussian or Bernoulli-$\{-C,+C\}$. This condition,
however, extends beyond i.i.d. increments processes and in
\Sec{sec:examples} we present an example where eq. \eqref{sym:lambda}
is satisfied for a Markov chain $\bfmX$.

Armed with these assumptions, definitions and characterizations of
the path $\psi^*$, we now prove the main result.

\begin{theorem}
\label{thm:main}
Under the above assumptions, for any $b>0$,
\begin{align*}
\lim_{n\to\infty} \frac 1n \log P(B\geq n^2b) = - K\sqrt{b}.
\end{align*}
where $K$ is given in \eqref{e:Kformula}.
Moreover,  
\begin{romannum}
\item 
The most likely asymptotic value of $\tau/n$ leading to $\{B\ge
n^2b\}$ is 
\begin{align*}
a =:  \frac{2 \lambda^*}{K}  \sqrt{b}.
\end{align*}
\item
The rescaled, most likely asymptotic path of $S_n(\cdot)$ is
\begin{align}
\label{eq:psi*b}
\psi^*_b(t)=a\psi^*(t/a) 
	= -2\frac{\sqrt{b}}{K}
	\,
	\Lambda\left(\lambda^*
	-\frac{t}{2}\frac{K}{\sqrt{b}}\right),
\end{align}
which is strictly concave on $[0,a]$.

\end{romannum}
\end{theorem}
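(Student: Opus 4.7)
The strategy is to rewrite the event $\{B \geq n^2 b\}$ as a sample path event for the rescaled walk $S_n(\cdot)$ and apply Assumption~\ref{ass:main}. Using
\[
\frac{B}{n^2} \;=\; \frac{1}{n^2}\sum_{i=1}^{\tau} S_i \;\approx\; \int_0^{\tau_{S_n}} S_n(s)\,ds, \qquad \tau_\phi := \inf\{t>0 : \phi(t) \leq 0\},
\]
the sample path LDP would formally give
\[
\lim_{n\to\infty} \frac{1}{n} \log P(B \geq n^2 b) \;=\; -\inf \Bigl\{ I_\infty(\phi) : \phi(0)=0,\ {\textstyle\int_0^{\tau_\phi}} \phi(s)\,ds \geq b \Bigr\}.
\]
The bulk of the proof is then to solve this variational problem, show the infimum equals $K\sqrt{b}$, and identify its unique minimizer with the rescaled path $\psi^*_b$ of~\eqref{eq:psi*b}.

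The main technical obstacle is that the functional $\phi \mapsto (\tau_\phi,\int_0^{\tau_\phi}\phi)$ is not continuous on $\Yd$, so the contraction principle does not apply directly. I would treat the lower and upper bounds separately. For the lower bound, fix a candidate optimizer $\phi^\circ$ with $\phi^\circ \geq 0$ on $[0,a]$, $\phi^\circ(a)=0$, and $\dot\phi^\circ(a)<0$; a small $\|\cdot\|$-neighborhood of $\phi^\circ$ forces nearby paths to return to zero close to time $a$ with swept area close to $\int_0^a\phi^\circ$, and the LDP lower bound on this open neighborhood produces the matching inequality. For the upper bound, one must rule out arbitrarily long excursions: the asymptotic slope condition built into $\Yd$ together with the additivity of $I_\infty$ makes any excursion with $\tau_\phi/n$ unbounded cost-prohibitive, so the problem reduces to a compact family of terminal horizons, where the fixed-horizon estimates of~\cite{DuffyMeyn10} within the infinite-horizon framework of~\cite{Ganesh04} apply.

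Given this reduction, the remaining problem on the excursion $[0,a]$ is
\[
\inf_{a>0,\,\phi} \Bigl\{ {\textstyle\int_0^a} I(\dot\phi(s))\,ds : \phi(0)=\phi(a)=0,\ \phi\geq 0,\ {\textstyle\int_0^a}\phi=b \Bigr\},
\]
since any extension beyond $\tau_\phi$ along slope $-\delta$ contributes zero cost (as $I(-\delta)=0$). Introducing a Lagrange multiplier $\mu$ for the area constraint, the Euler--Lagrange equation gives $\frac{d}{ds}\nabla I(\dot\phi(s)) = -\mu$, so $\nabla I(\dot\phi(s))$ is affine in $s$. The transversality condition at the two free endpoints reduces to $\nabla I(x) = I(x)/x$, whose relevant solutions are $x = x^*$ (by~\eqref{e:xstar}) and $x = -\delta$ (since both sides vanish there); positivity of $\phi$ on $(0,a)$ forces $\dot\phi(0)=x^*$ and $\dot\phi(a)=-\delta$. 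By Lemma~\ref{lem:lambdastaralt} this pins down $\nabla I(\dot\phi(s)) = \lambda^*(1-s/a)$, i.e.\ the minimizer is exactly $a\psi^*(\cdot/a)$. Applying the duality identities $\nablaIinv=\nabla\Lambda$ and $I(\nablaIinv(v)) = v\nablaIinv(v) - \Lambda(v)$ from Proposition~\ref{lem:psistaralt}, the change of variables $v=\lambda^*(1-s/a)$ and integration by parts reduce the area constraint to $a^2\bigl(-\int_0^{\lambda^*}\Lambda(v)\,dv\bigr)/(\lambda^*)^2 = b$, yielding $a = 2\lambda^*\sqrt{b}/K$, and the cost to $-(2a/\lambda^*)\int_0^{\lambda^*}\Lambda(v)\,dv = K\sqrt{b}$. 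Strict convexity of $I$ delivers uniqueness of the minimizer and strict concavity of $\psi^*_b$ on $[0,a]$, completing all three conclusions of the theorem.
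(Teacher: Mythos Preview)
Your proposal follows the same overall sandwich strategy as the paper (LDP upper bound via a closed set, LDP lower bound via an open neighborhood of the optimizer, then solve the resulting variational problem), and your variational analysis is in fact more explicit than the paper's, which simply defers the fixed-horizon optimization to \cite[Proposition~7]{DuffyMeyn10}.  Your Euler--Lagrange/Hamiltonian derivation of $\nabla I(\dot\phi(s))=\lambda^*(1-s/a)$ and the subsequent computation of $a$ and $K$ via $\Lambda$ are correct, though what you call ``transversality at the two free endpoints'' is really the single free-terminal-time condition $H(a)=0$ propagated back to $s=0$ by autonomy of the Lagrangian.

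The one substantive difference is in the upper bound.  You work directly with the excursion functional $\phi\mapsto\int_0^{\tau_\phi}\phi$, acknowledge it is discontinuous, and then argue somewhat informally that long excursions are ``cost-prohibitive'' so that one may restrict to a compact family of horizons.  The paper sidesteps this entirely by replacing the excursion integral with the \emph{continuous} functional $\phi\mapsto\int_0^\infty \max(\phi(t),0)\,dt$ on $\Yd$: one checks $\{B\ge n^2 b\}\subset\{S_n\in F\}$ with $F=\{\phi:\int_0^\infty\max(\phi,0)\ge b\}$, and $F$ is closed because $\phi\mapsto\max(\phi,0)$ is Lipschitz $\Yd\to\Y_0$ and integration is continuous there.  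This gives the LDP upper bound in one line, and the passage to a finite horizon then comes for free from $\phi(t)/(1+t)\to-\delta$ for every $\phi\in\Yd$.  Your route can be made to work, but the $\max(\phi,0)$ device is the clean way to turn the busy-period event into a closed sample-path event without ever touching $\tau_\phi$.
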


\begin{proof}

The method of proof is to construct a collection of open sets,
$\{B_\epsilon\}$, and a closed set $F$ in $\Yd$ such that for all
$\epsilon>0$ sufficiently small
\begin{align*}
\{S_n(\cdot)\in B_\epsilon\}
	\subset
	\{B\geq bn^2\}
	\subset \{S_n(\cdot)\in F\}
\end{align*}
and 
\begin{align*}
\lim_{\epsilon\to0}\liminf_{n\to\infty}
	 \frac 1n \log P(S_n(\cdot)\in B_\epsilon)
	= 
	\limsup_{n\to\infty} \frac 1n \log P(S_n(\cdot)\in F).
\end{align*}

Define
\begin{align*}
F := \left\{\phi\in\Yd: \int_0^\infty \max(\phi(t),0) dt \geq b\right\}.
\end{align*}
If $B\geq bn^2$, then $S_n(\cdot)\in F$ as
\begin{align*}
\int_0^\infty \max(S_n(t),0) dt 
	\geq \int_0^{n\tau} S_n(t) dt 
	= \frac{1}{n^2} \sum_{i=1}^{\tau} S_i 
	= \frac{1}{n^2} B \geq b.
\end{align*}
The set $F$ is closed as $\phi\mapsto\max(\phi,0)$ is Lipschitz
continuous from $\Yd\to\Y_0$ and, as for any $\phi\in\Yd$ we have
$\phi(t)<0$ for all $t$ sufficiently large, integration is also
continuous (e.g. \cite[Theorem 11.5.1]{Whitt02}). Thus we can use
the LDP upper bound to obtain
\begin{align*}
\limsup_{n\to\infty} \frac 1n \log P(B\geq bn^2) 
	&\leq
	\limsup_{n\to\infty} \frac 1n \log P(S_n(\cdot)\in F)\\
	&= -\inf\{I_\infty(\phi):\phi\in F\}
	= -\inf_{t>0}\inf\left\{I_\infty(\phi):
		\int_0^t\max(\phi(s),0) \, ds\geq b\right\}.
\end{align*}
As $\lim_{t\to\infty} \phi(t)/(1+t) = -\delta<0$ for all $\phi\in\Yd$,
this infimum over $t$ is attained at some finite $t$. Consider this
inner functional infimum for fixed $t$:
\begin{align}
\text{{\bf minimize} } \quad& I_\infty(\phi) \nonumber\\
\text{{\bf subject to} } \quad& 
\phi\in L^+[0,t] \text{ and } \int_0^t\max(\phi(t),0)\geq b.
\label{eq:optimization}
\end{align}
This functional optimization problem is closely related to \cite[eq.
(6)]{DuffyMeyn10}, where one can identify $b$ with $z$. Mild
alterations to Proposition 7 therein shows that with
$b=a^2\int_0^1\psi^*(t)dt$ for some $a>0$, with $\psi^*(t)$ defined
in \eq{eq:psi*}, this infimum 
occurs at any $t\geq a$ for
which it transpires that $\psi^*_b$, defined in \eq{eq:psi*b},
is the optimizer. This essentially occurs as
\begin{align*}
\psi^*_b = \arginf\left\{\int_0^a I(\dot\phi(t))dt : \int_0^a\phi(t) 
	= a^2 \int_0^1\psi^*(t)dt\right\}
\end{align*}
The quantity $\lambda^*$ in the definition of $\psi^*$ arises as a
scalar Lagrange multiplier \cite{DuffyMeyn10}. 
Note that $\psi^*_b(a)=0$, 
and thus the
most likely path to sweep a rescaled area $b$ satisfies $\tau/n\approx
a = \sqrt{b/\int_0^1\psi^*(t)dt}$. Using Proposition~\ref{lem:psistaralt} 
we have that 
\begin{align*}
\int_0^1 \psi^*(t) \, dt 
	= -\frac{1}{\lambda^*}\int_0^1 \Lambda(\lambda^*(1-t))\, dt
	= -\frac{1}{(\lambda^*)^2}\int_0^{\lambda^*} \Lambda(\theta)\, d\theta
\end{align*}
and thus the expression for $a$ in the statement follows.

What remains to be shown is that there is a coincident lower bound.
Let $\psi^*$ be the unique solution of \eq{eq:psi*} and define
$\psi^*_b$ using \eq{eq:psi*b}. The path $\psi^*_b(t)$ starts at
$0$ ends at $a$,
and is the optimal path that sweeps an area of
$\int_0^a\psi^*_b(t)dt=a^2\int_0^1\psi^*(t)dt=b$.  Let
\begin{align*}
B_\epsilon = B^1_\epsilon\cap B^2_\epsilon
\end{align*}
where, with $e(t)=\epsilon(1+t)$,
\begin{align*}
B^1_\epsilon = \left\{\phi:\inf_{t\in[0,\epsilon]}(\phi(t)-t\epsilon)>0\right\}
\text{ and }
B^2_\epsilon = \left\{\phi: \|\phi-(\psi^*_b+e)\|<\epsilon\right\}
\end{align*}
Both $B^1_\epsilon$ and $B^2_\epsilon$ are open by
construction and, for $\epsilon$ sufficiently small, their intersection
is non-empty. As defined, $S_n(\cdot)\notin B^1_\epsilon$ as $S_n(0)=0$,
but this is not significant as we can use an exponentially equivalent
representation with $S_n(0):=1/n$. Thus if $S_n(\cdot)\in B_\epsilon$
with $S_n(0):=1/n$, then $\{B>bn^2\}$. As $B_\epsilon$ is open for
all $\epsilon$, we can use the LDP lower bound
\begin{align*}
\liminf_{n\to\infty} \frac 1n \log P(B\geq bn^2) 
	&\geq
	\lim_{\epsilon\to0}\liminf_{n\to\infty} 
	\frac 1n \log P(S_n(\cdot)\in B_\epsilon)\\
	&\geq
	\lim_{\epsilon\to0}\liminf_{n\to\infty} 
	\frac 1n \log P(S_n(\cdot)\in B_\epsilon)\\
	&= -\lim_{\epsilon\to0}\inf\{I_\infty(\phi):\phi\in B_\epsilon\}
	= -I_\infty(\psi^*_b).
\end{align*}

To evaluate $K$, note that
\begin{align}
\label{eq:Kalt}
\lim_{n\to\infty} \frac 1n \log P(B\geq bn^2) 
	= - I_\infty(\psi^*_b)
	= 
	-\frac{\int_0^1 I(\ddt\psi^*(t)) dt}{\sqrt{\int_0^1\psi^*(t)dt}} 
	\sqrt{b}.
\end{align}
Using
$\Lambda(\theta)=\theta\,\nablaIinv(\theta)-I(\nablaIinv(\theta))$ for
$\theta\in[0,\lambda^*]$, integration by parts and the fact that
$\nabla\Lambda(\lambda^*)=0$, we have that
\begin{align*}
\int_0^1 I(\dot\psi^*(t)) \, dt
	= -\frac{2}{\lambda^*}\int_0^{\lambda^*} \Lambda(\theta) \, d\theta.
\end{align*}
Inserting this into \eq{eq:Kalt} in conjunction with the expression
for $\int_0^1 \psi^*(t) \, dt$ given above obtains the expression for
$K$ in \eqref{e:Kformula}.

\end{proof}

\section{Examples}
\label{sec:examples}

\Thm{thm:main} provides a mechanism for calculating the most likely
path to a large busy period $B$ as well as the exponent $K$. We shall perform
this calculation for illustrative examples: with i.i.d Gaussian
increments where $\lambda^*$, $\psi^*$ and $K$ can all be determined
in closed form; with Bernoulli$\{-1,+C\}$ increments, which includes
M/M/1 queue lengths, where explicit expressions of $\lambda^*$ are
not always possible, but $\psi^*$ can be written in terms of it and
$K$ must always be calculated numerically; and, finally, for
increments with Markovian dependencies where $\lambda^*$ and $\psi^*$
can be determined in closed form, but $K$ must be identified
numerically.

For each of the examples ten billion paths were simulated. As well
as recording the logarithm of the frequency with which $B$ exceeded
$b$ as a function of $b$, the largest swept area and highest paths
were logged for comparison with the theoretically predicted most
likely paths. For comparison with observations, \Thm{thm:main}
says that given we observe $B=n^2b$, on the scale of large deviations
the most likely time taken to generate the area is
\begin{align*}
\tau \approx na 
	= n 2\sqrt{b}/K
	=\tau^*_n \quad 
	    \text{where}
    \quad 
    \tau^*_n =: \half K /B^{-1/2} 
\end{align*}
and the most likely path is then 
\begin{align*}
\scaleS_i =: n S_n\left(i/n\right) 
	\approx \psi^*_{B/n^2}\left(i/n\right) 
	= na \psi^*\left(i/(na)\right)  \, .
\end{align*}
Thus, since $\tau^*_n = an$, for large swept area we have the approximation
\begin{align}
\label{eq:bigpathapprox}
\scaleS_i
    \approx   \tau^*_n \psi^*(  i/\tau^*_n). 
\end{align}
This most likely path is solely parameterized from the observations
by the value $B$. In particular, note that given $B$ the time $\tau$
is determined, so in the comparisons for the simulation results that follow,
the length of the most likely path is not explicitly fit to data. 

Similarly, \eq{eq:K} leads to the approximation
\begin{align}
\label{eq:rfapprox}
\log P(B\geq b) \approx -K\sqrt{b}, \text{ for large } b.
\end{align}

\smallbreak

For contrast, we also record the path that reaches the highest
height and use \Thm{thm:goc} for comparison.  Given a path
$S_n$ that reaches a height $H$, $S_n(\cdot)$ reaches $H/n$ and
therefore
\begin{align}
\label{eq:highpathapprox}
\scaleS_i = n S_n(i/n) \approx n \varphi^*(i/n) =
	\begin{cases}
	x^*i  & \text{ if } i<H/x^*,\\	
	H+(i-H/x^*)(-\delta) & \text{ if } i\geq H/x^*.
	\end{cases}
\end{align}
Again, given $H$, the time at which this most likely path returns
to zero is completely determined.

\subsection{Gaussian increments}
\label{sec:exgauss}

Let $\bfmX$ be i.i.d. Gaussian, with $X_0$ having mean $-\delta<0$
and variance $\sigma^2$. As $\bfmX$ is i.i.d., we have that
\begin{align*}
\Lambda(\theta) = \frac{\sigma^2}{2} \theta^2 - \delta \theta.
\end{align*}
Using \Thm{thm:main} we obtain the following explicit expressions
\begin{align*}
\lambda^* = \sup\{\theta:\Lambda(\theta)\leq 0\} = \frac{2\delta}{\sigma^2},
\qquad
\psi^*(t) = -\frac{1}{\lambda^*}\Lambda(\lambda^*(1-t)) = \delta t (1-t)\\
\text{ and } 
K =  2 \sqrt{-\int_0^{\lambda^*} \Lambda(\theta) \, d\theta} 
  =\frac{\delta^{3/2}}{\sigma^2}\sqrt{\frac{8}{3}}.
\end{align*}
The most likely path to $\{B\geq bn^2\}$ for large $n$ can also be
determined to be
\begin{align*}
\psi^*_b(t) = \delta t\left(1-t\sqrt{\frac{\delta}{6b}}\right).
\end{align*}

A demonstration of these results appears in \Fig{fig:gauss}. The
highest and largest swept area paths are compared with the most
likely conditioned on these quantities using the approximations in
\eq{eq:bigpathapprox} and \eq{eq:highpathapprox}. The distinct
nature of two paths to these two unlikely events is evident.  The
empirical likelihood of a large deviation is shown along with that
from the approximation above, which gives remarkably good agreement
up to a constant prefactor.

\subsection{Bernoulli $\{-1,+C\}$ increments}

\begin{figure}
\begin{center}
\includegraphics[width=1.00\columnwidth]{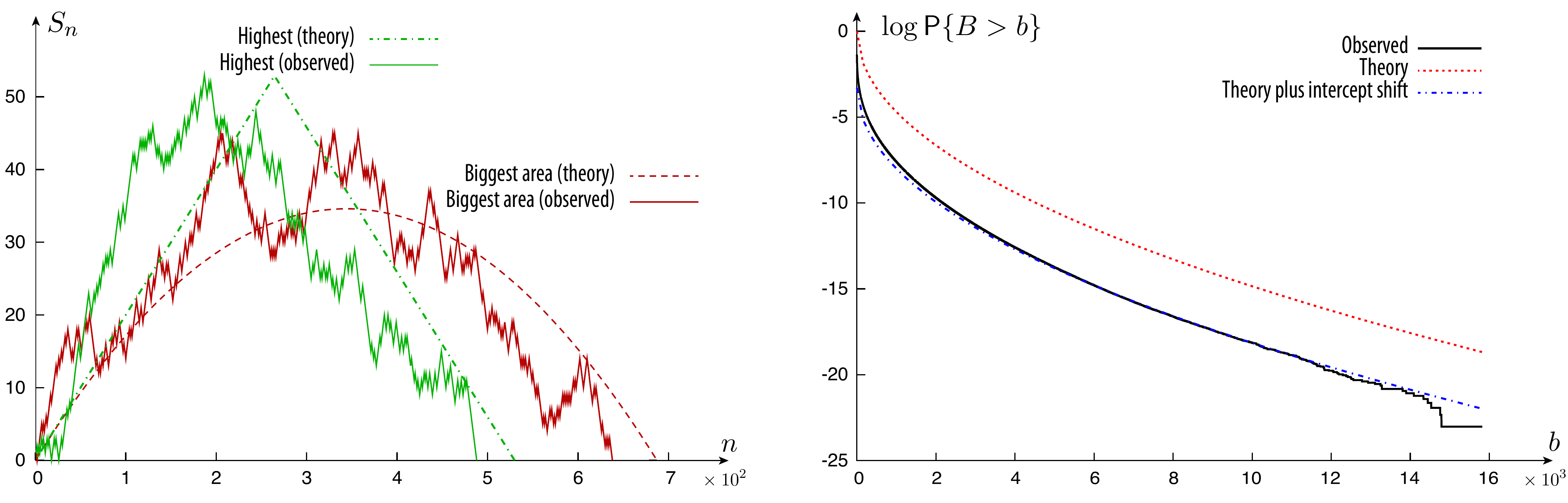}
\caption{Bernoulli $\{-1,+1\}$ increments 
with $P(X_1=-1)=0.6$, $P(X_1=+1)=0.4$. Simulated results from ten
billion paths.  (Left panel) Four paths displayed. Simulated path
with largest swept area $15,825$ and, conditioned on that area,
most likely path to give rise to it predicted by \Thm{thm:main},
which is concave. For contrast, also shown is the simulated path
that reaches the greatest height, $53$, and, conditioned on that
height, the most likely path to give rise to it as deduced from
\Thm{thm:goc}, which is piecewise linear.  (Right panel) Logarithm
of the empirical probability that area under busy period exceeds a
given level compared with $-K\sqrt{b}$ and, the linear
shift $-K\sqrt{b}+\kappa$ where $\kappa$ is chosen to match
the offset of the empirical observation}
\label{fig:mm1}
\end{center}
\end{figure}

\begin{figure}
\begin{center}
\includegraphics[width=1.00\columnwidth]{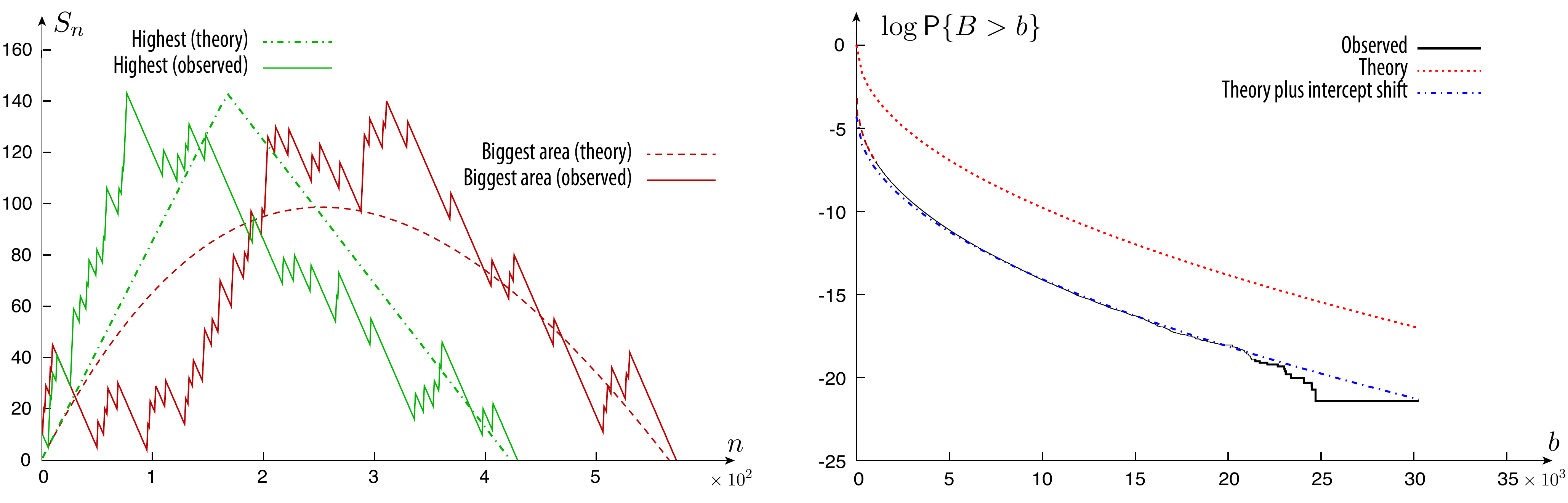}
\caption{Bernoulli $\{-1,+10\}$ increments with
$P(X_1=-1)=0.96$, $P(X_1=+10)=0.04$. 
Simulated results from ten billion paths. 
(Left panel)
Four paths displayed. Simulated path with largest swept area
$36,971$ and, conditioned on that area, most
likely path to give rise to it predicted by \Thm{thm:main},
which is concave. For contrast, also shown is the simulated path
that reaches the greatest height, $143$, and, conditioned on that
height, the most likely path to give rise to it as deduced from
\Thm{thm:goc}, which is piecewise linear.
(Right panel) Logarithm of the empirical probability that area
under busy period exceeds a given level compared with
$-K\sqrt{b}$ and, the linear shift  $-K\sqrt{b}+\kappa$ 
where $\kappa$ is chosen to match the offset of the empirical
observation}
\label{fig:bern10}
\end{center}
\end{figure}

Let $\bfmX$ be a Bernoulli sequence taking values $-1$ and $C$ with
$\alpha=1-\mu=P\{X_0=C\}<P\{X_0=-1\}=\mu$, so that $-\delta=\alpha C-\mu$,
and define the load $\rho=\alpha C/\mu$. For this process
\begin{align*}
\Lambda(\theta) = \log\left(\mu e^{-\theta}+\alpha e^{C\theta}\right)
\end{align*}
and, by \eq{e:xstar}, $\lambda^*$ is the unique positive
solution of 
\begin{align*}
\alpha e^{\lambda^*C} +\mu e^{\lambda^*}-1=0.
\end{align*}
Closed form expressions for $\lambda^*$ do not exist apart from in
a few special cases. If $C=1$, $\Lambda$ satisfies the conditions
of Proposition~\ref{lemma:rev}, the most likely path is symmetric
and this equation has the explicit solution $\lambda^*=-\log(\rho)$,
which could also have been determined by $2\nabla I(0)$ or
$I(\delta)/\delta$. Given the reversibility of the M/M/1 queue,
the symmetry of $\psi^*$ established in Proposition~\ref{lemma:rev}
is not surprising.

If $C\ge 2$, then the corresponding reflected random walk is not
reversible, and moreover the increments do not satisfy the symmetry
assumptions of Proposition~\ref{lemma:rev}.
For $C=2$ we obtain the expression,
\begin{align*}
\lambda^* = \log\left(
	\frac{\sqrt{- 3\mu^2 + 2\mu + 1} - \alpha}{2\alpha}
\right),
\end{align*}
while if $C=3$
\begin{align*}
\lambda^* &= \log\left(\frac{7z}{9}  -\frac{1}{3}\right),
\text{ where }
z= \sqrt[3]{\sqrt{\left(\frac{\mu}{2(\mu - 1)} - \frac{7}{54}\right)^2 
	+ \frac{8}{729}}
	- \frac{\mu}{2(\mu - 1)} + \frac{7}{54}}.
\end{align*}

Using \eq{eq:psi*alt1}, for arbitrary $C$ we conclude
that the most likely path on $[0,1]$ can be written in terms of
$\lambda^*$ as
\begin{align*}
\psi^*(t) 
	&= -\frac{1}{\lambda^*} 
		\log\left(\mu e^{-\lambda^*(1-t)} + \alpha e^{C\lambda^*(1-t)}
		\right).
\end{align*}
In order to calculate $K$ in equation \eqref{e:Kformula}, we need to
evaluate the integral $\int_0^{\lambda^*} \Lambda(\theta) \, d\theta$.
This doesn't result in a closed form for any $C$, but it is simple
to evaluate numerically.

In order to determine the most likely time and paths to a large
busy period and a great height on the scale of large deviations we
use the approximations \eq{eq:bigpathapprox} and \eq{eq:highpathapprox}.
For the reversible Bernoulli $\{-1,+1\}$ case corresponding to M/M/1
queue-lengths, \Fig{fig:mm1} compares the highest and
biggest paths with those from theory. The quality of the predictions
is apparent. With a numerical integration of $\Lambda$ giving
$K\approx 0.1485$, the asymptotic approximation is compared
with the empirical probability, showing great accuracy up to a
constant prefactor.

As an example of a non-reversible random walk, we consider the
Bernoulli $\{-1,+10\}$ case where the most likely paths are now
asymmetric (as seen in \Fig{fig:bern10}). For this example, $\lambda^*\approx
0.1439$ and $K\approx 0.0978$, both of which have been determined
numerically.

\subsection{Markovian $\{-1,+1\}$ increments}

As an example beyond i.i.d.\ increments, assume that the increments
process $\bfmX$ forms a two-state Markov chain on the state space
$\{-1,+1\}$ with transition matrix
\begin{align*}
\left( 
        \begin{array}{cc}
        1-\alpha & \alpha\\
        \beta & 1-\beta 
        \end{array}
        \right)
\text{ where } 0<\alpha < \beta<1. 
\end{align*}
The stationary distribution is $(\beta/(\alpha+\beta),\alpha/(\alpha+\beta))$
so we require $\alpha<\beta$ for stability. The sCGF $\Lambda$ can
be calculated using techniques described in \cite[Section 3.1]{Dembo98}:
\begin{align*}
\label{eq:2statescgf}
\Lambda(\theta) 
 = \log\left(\frac{(1-\alpha)e^{-\theta} 
  + (1-\beta)e^{\theta} 
  + \sqrt{4\alpha\beta+((1-\alpha)e^{-\theta}
	-(1-\beta)e^{\theta})^2})}{2}\right),
\end{align*}
which satisfies the conditions of Proposition~\ref{lemma:rev} and so
the most likely path is symmetric.
The rate function for the associated random walk can be calculated
using methods described in \cite{Deuschel89}. For example, from
\cite{Duffy05} we have that
\begin{align*}
I(x) = -\left(\frac{1-x}{2}\right)\log(1-\alpha+\alpha \chi(x))
	 -\left(\frac{1+x}{2}\right)\log(1-\beta+\beta/\chi(x))
\end{align*}
where
\begin{align*}
\chi(x) = \frac{ \alpha\beta x 
	+\sqrt{\alpha^2\beta^2x^2+\alpha\beta(1+x)(1-\alpha)(1-\beta)(1-x)}
	}{\alpha(1-\beta)(1-x)}.
\end{align*}
One can check directly that
\begin{align*}
x^* = -\rho = \delta = \frac{\beta-\alpha}{\alpha+\beta}
\text{ and } 
\lambda^* = I(x^*)/x^* 
	= \nabla I(x^*) 
	= 2\nabla I(0) 
	= \sup\{\theta:\Lambda(\theta)\leq0\}
	= \log\left(\frac{1-\alpha}{1-\beta}\right).
\end{align*}
We have the following expression of the most likely path returning
to $0$ at $t=1$, which can be seen directly to possess the symmetry
$\psi^*(t)=\psi^*(1-t)$ for $t\in[0,1]$,
\begin{align*}
&\psi^*(t)=-\frac{1}{\log(\frac{1-\alpha}{1-\beta})} \\
 &\log\left( 
	\frac{
	(1-\alpha)^t (1-\beta)^{1-t}
	+(1-\alpha)^{1-t}(1-\beta)^t 
  + \sqrt{4\alpha\beta+(
	(1-\alpha)^t (1-\beta)^{1-t}
	-(1-\alpha)^{1-t}(1-\beta)^t)^2})}{2}\right).
\end{align*}
The integral $\int_0^{\lambda^*} \Lambda(\theta)\,d\theta$ does not
evaluate in closed form, so again numerics must be used to determine
$K$. For example, if $\alpha=2/10$ and $\beta = 3/10$, then $\delta
= 2/10$, $x^*=2/10$, $\lambda^*=\log(8/7)$ and $K\approx 0.0489$.
A simulation-based illustration of these results appears in
\Fig{fig:markov}.

\begin{figure}
\begin{center}
\includegraphics[width=1.00\columnwidth]{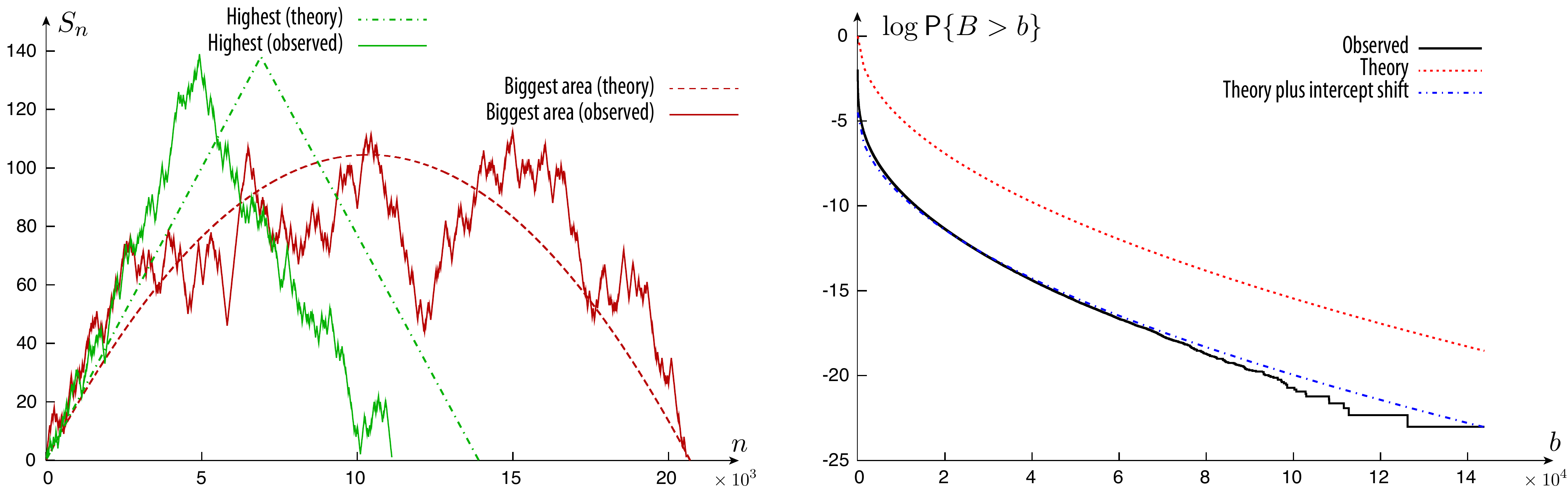}
\caption{Markov $\{-1,+1\}$ increments with
$P(X_{t+1}=+1|X_t=-1)=\alpha=2/10$, $P(X_{t+1}=-1|X_t=+1)=\beta=3/10$. 
Simulated results from ten billion paths. 
(Left panel)
Four paths displayed. Simulated path with largest swept area
$143,875$ and, conditioned on that area, most
likely path to give rise to it predicted by \Thm{thm:main},
which is concave. For contrast, also shown is the simulated path
that reaches the greatest height, $139$, and, conditioned on that
height, the most likely path to give rise to it as deduced from
\Thm{thm:goc}, which is piecewise linear.
(Right panel)
Logarithm of the empirical probability that area under busy period
exceeds a given level compared with $-K\sqrt{b}$ and,
the linear shift $-K\sqrt{b}+\kappa$ where
$\kappa$ is chosen to match the offset of the empirical observation}
\label{fig:markov}
\end{center}
\end{figure}

\section{Conclusions and discussion on estimation}
\label{sec:conc}

As well as identifying the most likely path to a large swept area,
\Thm{thm:main} shows that in the absence of long range dependence
and heavy tailed increments, in broad generality we have the
approximation in \eq{eq:rfapprox}:
\begin{align*} 
\log P(B\geq b) \approx -K\sqrt{b}, \text{ for large } b,
\end{align*}
where $K$ can be identified in terms of the sCGF $\Lambda$ associated
with the increments $\bfmX$. An approximation such as this might be of
value for practical purposes, but unless $\bfmX$ is known in advance
we would require a methodology to estimate $K$ from observations
of the system.

Based on thermodynamic ideas, Duffield {\it et al.} \cite{Duffield95A}
investigated an estimation scheme for $\Lambda$ based on observations
of $\bfmX$. They demonstrated empirically that the scheme has
desirable properties for a large class of of increment processes.
Indeed, if $\bfmX$ consists of i.i.d. bounded random variables
\cite[Theorem 1]{Duffy05} or a finite state Markov chain \cite[Theorem
3]{DuffyMeyn11}, from observations of $\bfmX$ one can construct
consistent functional estimates, $\{\estLambda_n\}$, of
$\Lambda$ that themselves satisfy a LDP in the space of $\R$-valued
convex functions on $\R$. From these, we can deduce an LDP for
estimating $K$ from observations of $\bfmX$ as follows.
If $\bfmX$ is i.i.d, define
\begin{align*}
\estLambda_n(\theta) = \log\left(\frac1n\sum_{k=1}^ne^{\theta X_k}\right).
\end{align*}
If $\bfmX$ forms a finite state Markov chain with an irreducible
transition matrix on $\{f(1),\ldots,f(M)\}$ where $f(i)\neq f(j)$
for $i\neq j$, then with $0/0$ defined to be $0$ we define an
empirical transition matrix  with entries,
\begin{align*}
(\estP_n)_{i,j} :=  
        \left(\sum_{k=1}^n 1_{\{(X_{k-1},X_k)=(f(i),f(j))\}} \right) /
                \left(\sum_{k=1}^n 1_{\{X_{k-1}=f(i)\}}\right),
\end{align*}
and let $D_\theta$ denote the matrix with diagonal entries
$\exp(\theta f(1)), \ldots, \exp(\theta f(M))$
and all off-diagonal entries equal to zero. Then our estimate 
of $\Lambda$ given $n$ observations is
\begin{align*}
\estLambda_n(\theta) = \log\rho(\estP_nD_\theta),
\end{align*}
where where $\rho$ is the spectral radius. In both cases we define
the estimates
\begin{align*}
\lambda^*_n = \sup\{\theta:\estLambda_n(\theta)\leq 0\}
\text{ and }
K_n = 2 \sqrt{-\int_0^{\lambda^*_n}\estLambda_n(\theta)\,d\theta}.
\end{align*}

Regarding the $\lambda^*$ estimates, \cite[Lemma 1]{Duffy05} proves
that the function $g(c)=\sup\{\theta:c(\theta)\leq0\}$ is continuous
on the space of convex functions equipped with the topology of
uniform convergence on compact subsets at all $c$ such that $c(0)=0$
and there does not exist a $\kappa>0$ so that $c(\theta)=0$ for
$\theta\in[0,\kappa]$.  Thus from the LDP for the estimators
$\{\estLambda_n\}$, if the support of $X_k$ excludes a finite ball
around the origin and if $f(k)\neq0$ for any $k$ in the Markovian
case, we have an LDP for $\{\lambda^*_n\}$ by Puhalskii's extension
of the contraction principle \cite[Theorem 2.1]{Puhalskii95}.
As the convex functions of interest are real-valued, the function
$c\mapsto\int_0^{g(c)}c(\theta)d\theta$ is also continuous so
that again Puhalskii's extension of the contraction principle applies
and we have an LDP for $\{K_n\}$. Consistency of the $\Lambda$
estimators ensures consistency of the $\lambda^*$ and $K$ estimators.
Thus these $\Lambda$-estimators enable the estimation of $K$ directly
from observations of $\bfmX$, offering the possibility of estimating
the probability of a system experiencing an exceedingly large busy
period in advance of one occurring.

\bigskip
\noindent
\textbf{Acknowledgment}
Financial support from the AFOSR
grant FA9550-09-1-0190  is gratefully
acknowledged.

\def\cprime{$'$}


\end{document}